\newtheorem{theorem}{Theorem}
\newtheorem{lemma}[theorem]{Lemma}
\newtheorem{corollary}[theorem]{Corollary}
\crefname{lemma}{Lemma}{Lemmas}
\crefname{theorem}{Theorem}{Theorems}
\newtheoremstyle{definition}
 {4pt}
 {4pt}
 {\sl}
 {}
 {\bfseries}
 {.}
 {.5em}
 {}
\theoremstyle{definition}
\theoremstyle{remark}
\theoremstyle{plain}
\newtheorem*{claim*}{Claim}
\newtheoremstyle{introthms}
 {3pt}
 {3pt}
 {\itshape}
 {}
 {\bfseries}
 {.}
 {.5em}
 {\thmnote{#3}}
\theoremstyle{introthms}
\newcommand{\cF}{\mathcal{F}}
\newcommand{\Tr}{{\rm Tr}}
\newcommand{\Gr}{{\rm Gr}}
\newcommand{\UOP}{{\rm UOP}}
\begin{document}

\title{Planar Ramsey graphs}
\author{ M. Axenovich\thanks{Department of Mathematics, Karlsruhe Institute of Technology}, C. Thomassen\thanks{Department of Applied Mathematics and Computer Science,
Technical University of Denmark}, U. Schade\thanks{Department of Mathematics, Karlsruhe Institute of Technology}, T. Ueckerdt\thanks{Computer Science Department, Karlsruhe Institute of Technology}}

\maketitle
 
\begin{abstract}
 We say that a graph $H$ is {\it planar unavoidable} if there is a planar graph $G$ such that any red/blue coloring of the edges of $G$ contains a monochromatic copy of $H$, otherwise we say that $H$ is {\it planar avoidable}. 
 I.e., $H$ is planar unavoidable if there is a Ramsey graph for $H$ that is planar. 
 It follows from the Four-Color Theorem and a result of Gon\c{c}alves that if a graph is planar unavoidable then it is  bipartite and outerplanar.
 We prove that the cycle on $4$ vertices and any path are planar unavoidable. 
 In addition, we prove that all trees of radius at most~$2$ are planar unavoidable and there are trees of radius~$3$ that are planar avoidable. 
 We also address the planar unavoidable notion in more than two colors.
\end{abstract}

\section{Introduction}

Ramsey's theorem~\cite{R} claims that any graph is Ramsey in the class of all complete graphs, i.e., for any graph $G$ and any number $k$ of colors there is a sufficiently large complete graph such that in any coloring of its edges in $k$ colors there is a monochromatic copy of $G$. 
In general for graphs $G$ and $H$, we write $G\rightarrow_k H$ and say that $G$ $k$-{\it arrows} $H$ if any coloring of the edges of $G$ in $k$ colors contains a monochromatic copy of $H$. 
We write $G\rightarrow H$ and say that $G$ {\it arrows}  $H$ if $k=2$.
There are classes of graphs that are Ramsey in their own class, meaning that for any graph $H$ in a class $\cF$ there is a graph $G\in \cF$ such that $G\rightarrow H$. 
Examples of such classes include bipartite graphs, graphs with a given clique number, and graphs of a given odd girth, see~\cite{NR1, NR2}.
Here, we are concerned with Ramsey properties of the class of all planar graphs.
We say that a planar graph $H$ is $k$-{\it planar unavoidable} if there is a planar graph $G$ such that $G\rightarrow_k H$, otherwise we call $H$ $k$-{\it planar avoidable}. Similarly, we define {\it outerplanar unavoidable} and {\it outerplanar avoidable} graphs.
When $k=2$, we write {\it planar unavoidable} instead of $2$-{\it planar unavoidable}, or, if clear from context, simply {\it unavoidable}.
The complexity of the problem to edge-color planar graphs with a given number of colors so that there is no monochromatic copy of a given graph was addressed by Broersma et al.~\cite{BFKW}. A related problem of bounding local density of Ramsey graphs has been addressed for example  in \cite{RR} and \cite{MP}. \\

A result of Gon\c{c}alves~\cite{Go} states that any planar graph can be edge-colored in two colors so that each color class is an outerplanar graph. 
Thus any planar unavoidable graph is outerplanar. 
The Four Color Theorem~\cite{AH} implies that any planar graph is a union of two bipartite graphs. 
In general any graph that is $2^k$-colorable for $k\in \mathbb{N}$ is a union of at most $k$ bipartite graphs.\\

This shows that any planar unavoidable graph is bipartite and outerplanar and thus gives necessary conditions for planar unavoidability.\\

Next we give several sufficient conditions. 
Here, a {\it generalized broom} is a union of a path and a star such that they share only the center of the star.

\begin{theorem}\label{good2}
If $H$ be a path, a cycle on $4$ vertices, a tree of radius at most $2$, or a generalized broom, then $H$ is planar unavoidable.
 Moreover, if $H$ is a path, then it is outerplanar unavoidable.
\end{theorem}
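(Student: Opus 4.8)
The plan is to treat the four families separately, producing for each a planar graph $G$ with $G\rightarrow H$; only the path case will be made outerplanar, which yields the ``moreover'' clause. For paths I would take $G=P_N^2$, the square of a long path: this is a triangulated polygon, hence outerplanar (so it settles the planar and the outerplanar statement simultaneously). Given a $2$-colouring, orient every edge from the smaller to the larger index, so that a monochromatic \emph{increasing} path is a monochromatic path. Writing $r(i)$ and $b(i)$ for the number of vertices in the longest red, respectively blue, increasing path ending at $i$, I would set up the recurrences coming from the two backward edges $\{i-1,i\}$ and $\{i-2,i\}$ and follow the potential $r(i)+b(i)$ along $i=1,\dots,N$. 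A long monochromatic run of base edges already produces a long monochromatic path, while at each colour change the skip edge $\{i-2,i\}$ lets one colour ``hop over'' the change and keep growing; quantifying this shows $\max_i\max(r(i),b(i))$ grows linearly in $N$, so $N\ge cn$ forces a monochromatic $P_n$. The delicate point is the bookkeeping ruling out that the two colours alternately reset each other's potential, i.e.\ showing that one fixed colour actually accumulates $n$ vertices.

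For $C_4=K_{2,2}$ a colouring with no monochromatic copy is exactly a partition of $E(G)$ into two graphs in which no two vertices share two common neighbours of the same colour. A single biclique $K_{2,t}$, and even a chain or a plain grid of them, can always be ``split'' (send one hub's edges to red, the other's to blue; or colour a grid's horizontal edges red and vertical edges blue), so counting is useless here: two $C_4$-free planar graphs can carry more than $3(n-2)$ edges, which a planar $G$ never exceeds. I would therefore build $G$ from many $4$-cycles overlapping in a genuinely two-dimensional, non-bipartite pattern, so that the direction-split colouring is unavailable, and show that making every $4$-cycle non-monochromatic propagates a constraint around a closed region and ends in a parity contradiction. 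Exhibiting an explicit planar graph for which this propagation is forced is the crux of this case.

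The radius-$2$ trees and the generalized brooms are the main obstacle, and I would attack them through one high-degree construction. Each is contained in a ``centre with $p$ high-multiplicity branches'' pattern, so it suffices to force, in one colour $\gamma$, a vertex $v$ with $p$ incident $\gamma$-edges to branch vertices, each of which has $q$ further private $\gamma$-neighbours (for the broom one instead takes a long monochromatic path, produced as above, one of whose vertices carries $b$ extra $\gamma$-edges). A vertex of degree $\ge 2p$ already yields a monochromatic star, but the adversary can \emph{decouple}: it colours each would-be branch vertex's downward edges opposite to its edge toward $v$, and a pure tree, a wheel of wheels, and a high-degree caterpillar are all defeated this way (the essential reason being that planarity caps degrees, so density alone never embeds a large tree). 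The real difficulty is to force the colour of the edge \emph{into} a branch vertex to agree with the colour of that vertex's star of private leaves. I would overcome this with a recursive planar gadget: starting from the triangle (which forces a monochromatic $P_3=S_{1,1}$), amplify so that a graph forcing many monochromatic copies of $T_{p,q-1}$ can be assembled, with local triangulation tying each branch edge's colour to its leaf-edges, into one forcing $T_{p,q}$. Designing this gadget so that the adversary can no longer separate a branch vertex's incoming colour from its outgoing colours, while keeping the construction planar and the grandchildren distinct, is where essentially all the work lies.
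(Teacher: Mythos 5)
Your path construction fails at the very first step: $P_N^2$ does not arrow $P_5$, for any $N$. Color the edge $\{i,j\}$ (with $i<j$) red if $\lceil i/2\rceil$ is odd and blue otherwise. The red graph then splits into components on the vertex sets $\{4m+1,\dots,4m+4\}$ (a triangle with a pendant edge), the blue graph into components on $\{4m+3,\dots,4m+6\}$, so every monochromatic path has at most $4$ vertices no matter how large $N$ is. This is exactly the ``alternately reset each other's potential'' scenario you flagged as the delicate point; it cannot be ruled out, because a linear strip of triangles simply has too little branching. The paper needs genuinely different hosts: for the planar statement the triangulated grid $\Gr(n)$ together with a Hex-lemma argument (a minimal red edge-cut separating two opposite sides of a near-triangulation is, via the dual, a red path joining the other two sides), and for the outerplanar statement the exponentially branching graph $\UOP(n^2)$, where one tracks along a nested sequence of outer edges the pair (longest red path ending at the edge, longest blue path ending there inside the enclosed region) and argues this pair strictly increases, so after $(n-1)n+1$ steps one coordinate reaches $n$. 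Since your broom sketch also starts from ``a long monochromatic path, produced as above,'' it inherits this failure.

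The remaining three cases are not proofs but descriptions of where a proof would be hard, each ending with an admission that the key construction is missing (``the crux of this case,'' ``where essentially all the work lies''). For comparison, the paper resolves them with one concrete gadget, the \emph{fish} $F_{x,y}$ (an edge $xy$ plus a path of common neighbours of $x$ and $y$), which lives inside the iterated triangulation $\Tr(n)$. For $C_4$ a fish with $15$ spine vertices plus a degree-$3$ vertex in each spine face is analysed by hand; for brooms one hangs disjoint fishes on the edges of a long monochromatic red path and observes that either some fish yields a red star at a path vertex or every fish contributes many blue double ribs, whose union contains the blue broom; for radius-$2$ trees one first forces, for every edge $xy$ of $\Tr(0)$, a monochromatic $K_{2,k}$ on common neighbours (your ``decoupling'' adversary is defeated here because a vertex joined to both $x$ and $y$ by a red--blue pair in $2k$ ways produces two oppositely coloured stars with the same leaf set, which already forces the tree one level up), and then iterates this two more levels inside $\Tr(19k)$. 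So the one place where you committed to a concrete construction it is wrong, and everywhere else the construction is absent.
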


The next result shows that not only odd cycles and non-outerplanar graphs are planar avoidable, but also some trees. 

\begin{theorem}\label{not-good}
 There is a planar avoidable tree of radius $3$ and an outerplanar avoidable tree of radius $2$.
\end{theorem}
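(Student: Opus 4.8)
The plan is to exhibit explicit trees and to reduce ``no monochromatic copy'' to a statement about where the high-degree vertices of each color may sit. For a parameter $h$, let $T_r$ denote the complete $h$-ary tree of depth $r$: a root with $h$ children, every internal vertex with $h$ children, and all leaves at depth $r$. Then $T_2$ has radius $2$ and $T_3$ has radius $3$, so these are my candidate trees for the two assertions. The key observation is structural: if a color class, say red, contains a copy of $T_r$, then the image of the root has red-degree at least $h$ and the image of every internal vertex has red-degree at least $h+1$. Calling a vertex \emph{red-heavy} if its red-degree is at least $h$, the images of the root and of the internal vertices along a single root-to-leaf branch form a red path on $r$ heavy vertices. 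Consequently, if one can $2$-color $E(G)$ so that in each color the heavy vertices induce (in that color) a graph with no path on $r$ vertices, then there is no monochromatic $T_r$. For $r=2$ this means the heavy vertices must be \emph{independent} in each color; for $r=3$ it means they must induce a graph of maximum degree at most $1$.

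So the task reduces to two coloring lemmas. \textbf{Outerplanar case:} for a suitable constant $h$, every outerplanar $G$ can be $2$-colored so that the red-heavy vertices are independent in red and the blue-heavy vertices are independent in blue; this gives that $T_2$ is outerplanar avoidable. \textbf{Planar case:} every planar $G$ can be $2$-colored so that the heavy vertices of each color induce a graph of maximum degree at most $1$; this gives that $T_3$ is planar avoidable. To build such colorings I would use degeneracy: outerplanar graphs are $2$-degenerate and planar graphs are $5$-degenerate, so each admits an acyclic orientation with bounded out-degree. Since a vertex's out-degree is then a constant, its color-degree, hence its heaviness, is governed up to that constant by its colored \emph{in}-edges. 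Coloring each edge by a rule controlled at its head and processing the vertices in an order in which the heaviness of the already-colored part is known, one tries to guarantee that every vertex is heavy in at most one color and that no monochromatic edge (for $r=2$), respectively no monochromatic path on three vertices (for $r=3$), joins heavy vertices of the same color. Test configurations such as an odd cycle with many pendant leaves, or a vertex carrying many pendant stars, suggest the right local moves: make each high-degree vertex light in one color by pushing only a constant number of its edges into that color, and force the unavoidable heaviness of two adjacent high-degree vertices into different colors.

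The main obstacle is exactly the self-referential nature of ``heavy'': whether a vertex is heavy depends on the very coloring we are constructing, so the two lemmas cannot be reduced to a fixed auxiliary graph that is then properly $2$-colored (indeed the high-degree vertices of an outerplanar graph can induce odd cycles). I expect the heart of the argument to be an induction that peels off a vertex of small degree---at most $2$ in the outerplanar case and at most $5$ in the planar case---while maintaining an invariant strong enough that reinserting the vertex together with its boundedly many edges neither pushes a neighbor across the heaviness threshold so as to create a forbidden monochromatic configuration, nor conflicts with earlier choices; the bounded degeneracy is precisely what supplies the needed slack. Finally, the contrast between the two cases should be explained by density: the independence lemma for $r=2$ must fail for planar graphs, since trees of radius $2$ are planar unavoidable by Theorem~\ref{good2}, so exactly one extra level of branching is what is needed to pass from outerplanar avoidability to planar avoidability.
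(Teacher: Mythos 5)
Your reduction is sound as far as it goes: a monochromatic complete $h$-ary tree of depth $r$ does force a monochromatic path on $r$ vertices each of which has degree at least $h$ in that color, so it would indeed suffice to $2$-color so that, in each color, the ``heavy'' vertices span no such path. But the two coloring lemmas to which you reduce the theorem are the entire content of the proof, and you do not prove either of them; you only name the obstacle (the self-referential definition of heaviness) and express the expectation that a degeneracy-based peeling induction with a suitable invariant will work. That is a genuine gap. The peeling approach as sketched is in real trouble: when you reinsert a low-degree vertex, its boundedly many edges raise the degrees of its neighbors, which can retroactively promote them to heavy in a color in which they already have a monochromatic edge (or $P_3$) to another vertex that later also becomes heavy. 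So the invariant would have to anticipate, at the time each edge is colored, the final heaviness status of both endpoints --- and you give no such invariant, nor an argument that one exists. Without the lemmas the theorem is not established.

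For comparison, the paper sidesteps the self-reference entirely by using global vertex decompositions rather than a vertex-by-vertex induction. For the planar case it takes a partition $V_1,V_2,V_3$ of $V(G)$ into sets each inducing a linear forest (Poh's theorem) together with an orientation of out-degree at most $3$; it colors the edges inside each $G[V_i]$ alternately along the paths, and colors each cross edge oriented from $V_i$ to $V_j$ red if $i<j$ and blue if $i>j$. Then any vertex of red degree $5$ that has two incoming red tree-edges must receive a red cross edge from a strictly lower-indexed part, so it cannot lie in $V_1$ and must have a tree-neighbor in a lower part; chasing this for the three non-leaf levels of the radius-$3$ tree exhausts the three parts. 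The outerplanar case is the same scheme one level shorter, with two linear forests (Cowen, Cowen, Woodall) and out-degree at most $2$. If you want to salvage your plan, the realistic route is to prove your two coloring lemmas by exactly this kind of partition-plus-orientation coloring rather than by greedy insertion; note also that the paper gets the quantitative thresholds (degree $5$ for planar, degree $4$ for outerplanar, hence trees on $106$ and $21$ vertices) directly from the out-degree bounds plus the alternation, which your sketch leaves unspecified.
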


Moreover, any planar avoidable tree has at least $8$ vertices and there is a planar avoidable tree on $106$ vertices. 

A result of Hakimi et al.~\cite{Ha}, see also~\cite{AA}, states that any planar graph can be edge-decomposed into at most five star forests. 
Thus the $k$-planar unavoidable graphs for $k\geq 5$ are precisely  the star forests.
Next we summarise our results for $k$-planar unavoidable graphs, for $k=3$ and $4$.

\begin{theorem}\label{more-colors}
 If $H$ is $k$-planar unavoidable for $k\geq 3$, then $H$ is a forest. 
 If $H$ is $4$-planar unavoidable, then $H$ is a caterpillar forest. 
 There are $3$- and $4$-planar avoidable trees of radius~$2$. 
\end{theorem}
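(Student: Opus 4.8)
The three parts of the theorem are indexed by the number of colours, and I would prove each one by the same structural template: partition the edges of an arbitrary planar graph into few forests of a restricted shape, and observe that a colour class of that shape cannot contain the forbidden tree. The necessary conditions (forest, caterpillar forest) then follow by contraposition.

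\emph{The forest condition (any $k\ge 3$).} Since every planar graph has arboricity at most $3$ (Nash--Williams, via $|E(H)|\le 3|V(H)|-6\le 3(|V(H)|-1)$ for every subgraph $H$), I would fix for each planar $G$ a partition of $E(G)$ into three forests and colour them with three of the $k\ge 3$ available colours. Every colour class is then acyclic, so no monochromatic subgraph contains a cycle; hence a $k$-planar unavoidable $H$ has no cycle and is a forest. This part is routine.

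\emph{The caterpillar-forest condition ($k=4$).} First I would record that $k$-planar avoidability is monotone under subgraphs: if $H'\subseteq H$ and $H'$ is $k$-planar avoidable, then any colouring avoiding a monochromatic $H'$ also avoids a monochromatic $H$, so $H$ is $k$-planar avoidable. The minimal non-caterpillar tree is the subdivided claw $Y$ ($K_{1,3}$ with each edge subdivided once): a forest is a caterpillar forest if and only if it is $Y$-free, and every non-caterpillar tree contains $Y$. Using the previous part to dispose of non-forests, it therefore suffices to show that $Y$ is $4$-planar avoidable. For this I would prove the key lemma that the edges of every planar graph decompose into four caterpillar forests; since any subgraph of a caterpillar forest is again a caterpillar forest, each colour class is $Y$-free, and hence contains no non-caterpillar forest $H$. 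The main obstacle is the key lemma itself; my plan is to start from an acyclic orientation of $G$ with out-degree at most $3$ (which exists for planar $G$ by Hakimi's condition $|E(H)|\le 3|V(H)|$) and then to colour the edges so that each colour class is acyclic and, moreover, its non-leaf vertices can be arranged along a common path (a spine). Showing that four colours suffice to realise these spines simultaneously is the crux.

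\emph{Avoidable trees of radius $2$.} For $k=4$ the tree $Y$ already works: it has radius $2$ and is not a caterpillar forest, so by the contrapositive of the previous part it is $4$-planar avoidable. Since avoidability only becomes easier as $k$ grows, it remains to exhibit a $3$-planar avoidable tree of radius~$2$ (which is then automatically $4$-planar avoidable as well; the point of naming $Y$ separately is that it is a small witness). I would take the non-caterpillar radius-$2$ tree $T^\ast$ consisting of a centre with three children, each child carrying $s$ further leaves, with $s$ large. A monochromatic copy of $T^\ast$ in colour $i$ forces a centre with three colour-$i$ neighbours, each of colour-$i$ degree at least $s+1$; call a vertex \emph{$i$-heavy} if its colour-$i$ degree is at least $s+1$. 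Thus it suffices to $3$-colour $E(G)$ so that in every colour each vertex has at most two $i$-heavy neighbours. I expect this colouring to be the main obstacle, and the mechanism I would use is to orient $G$ acyclically with out-degree at most $3$ and then colour edges so that (i) at each vertex at most two out-edges receive any single colour, and (ii) whenever a vertex is $i$-heavy it has no out-edge of colour $i$. Under (ii), a heavy child $v$ of a centre $c$ joined by a colour-$i$ edge cannot be reached by an edge oriented $v\to c$ (that would be a colour-$i$ out-edge of the heavy $v$), so the edge is oriented $c\to v$; by (i) the centre $c$ has at most two such colour-$i$ out-edges, hence at most two $i$-heavy children, ruling out $T^\ast$. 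The remaining difficulty is to realise (i) and (ii) consistently at vertices that are heavy in several colours (forcing their at most three out-edges into too few colours); I would handle these few high-in-degree vertices separately, choosing the threshold $s$ and, if needed, iterating the argument on them. This is the step I expect to require the most care.
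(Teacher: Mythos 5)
Your first part (arboricity at most $3$ for planar graphs, hence every non-forest is $3$-planar avoidable, and a fortiori $k$-planar avoidable for $k\geq 3$) is exactly the paper's argument and is fine. The other two parts each contain a genuine gap. For the caterpillar condition, everything rests on your ``key lemma'' that the edges of every planar graph decompose into four caterpillar forests. You do not prove it: you only sketch a plan (an acyclic out-degree-$3$ orientation, then threading the non-leaf vertices of each colour class onto spines) and yourself call the simultaneous realisation of the four spines ``the crux''. This is in fact a known and nontrivial theorem of Gon\c{c}alves (the caterpillar arboricity of planar graphs is at most~$4$), which the paper simply cites; as written, your proposal neither proves it nor cites it, so this part is incomplete. (Your detour through the subdivided claw $Y$ is correct but unnecessary: once each colour class is a caterpillar forest, no monochromatic copy of any non-caterpillar-forest can occur, since subgraphs of caterpillar forests are again caterpillar forests.)

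For the radius-$2$ avoidable trees, the $4$-colour witness $Y$ works modulo the previous part, and the monotonicity in $k$ you invoke is correct; but your $3$-colour argument is circular exactly where the work is. You want a colouring satisfying (i) at most two out-edges of each colour at every vertex and (ii) no colour-$i$ out-edge at any $i$-heavy vertex, yet ``$i$-heavy'' is defined by the very colouring being constructed, and the two conditions can be outright contradictory: a vertex of out-degree $3$ that ends up heavy in two colours must put all three of its out-edges into the remaining colour, violating (i), and a vertex heavy in all three colours can have no out-edge at all. A planar graph can contain linearly many mutually adjacent-in-$G$ vertices of degree at least $2(s+1)$, so ``handling these few vertices separately'' and ``iterating'' is not yet an argument. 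The paper avoids this self-reference with a different structural ingredient: by Poh's theorem, $V(G)$ partitions into $V_1,V_2,V_3$ each inducing a linear forest; one orients each bipartite graph between two parts with out-degree at most~$2$ and colours an edge with colour $i$ if it lies inside $V_i$ or points into $V_i$. Then any vertex with three incident edges of colour $i$ must itself lie in $V_i$, so the degree-$\geq 3$ vertices of a monochromatic copy of the radius-$2$ tree $T_3$ would all lie in $V_i$ and force a vertex of degree~$3$ in the linear forest $G[V_i]$ --- a contradiction obtained with no circularity. To complete your proof you would need either to import such a vertex-partition ingredient or to supply a genuinely non-circular construction meeting (i) and (ii).
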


Moreover,  there are $3$- and $4$-planar avoidable trees on $10$ and $6$ vertices, respectively.

We provide some definitions in Section~\ref{definitions}. 
Sections~\ref{sec:good-2},~\ref{sec:not-good-1}, and~\ref{sec:more-colors} contain the proofs of Theorems~\ref{good2},~\ref{not-good}, and~\ref{more-colors} respectively.
Finally Section~\ref{conclusions} states some concluding remarks and open questions.

\section{Definitions}\label{definitions}

We denote a complete graph, a path, and a cycle on $n$ vertices by $K_n, P_n,$ and $ C_n$, respectively. 
A complete bipartite graph with parts of sizes $m$ and $n$ is denoted by $K_{m,n}$.
For an integer $k$, $k\geq 2$, a {\it $k$-ary tree} is a rooted tree in which each vertex has at most $k$ children. 
A {\it perfect} $k$-ary tree is a $k$-ary tree in which every non-leaf vertex has $k$ children and all leaf vertices have the same distance from the root.
For all other standard graph theoretic definitions, we refer the reader to the book of West~\cite{W}.\\

\noindent
{\bf Iterated Triangulation $\Tr(n)$}:\\
An {\it iterated triangulation} is a plane graph $\Tr(n)$ defined as follows: $\Tr(0)=K_3$ is a triangle, $\Tr(i) \subseteq \Tr(i+1)$, $\Tr(i+1)$ is obtained from $\Tr(i)$ by inserting a vertex in each of the inner faces of $\Tr(i)$ and connecting this vertex with edges to all the vertices on the boundary of the respective face, see Figure~\ref{Def Tr(n)}.
We see that $\Tr(i)$ is a  triangulation and each triangle of $\Tr(i)$ bounds a face of $\Tr(j)$ for some $j \leq i$.\\

\begin{figure}[htpb]
 \centering
 \includegraphics{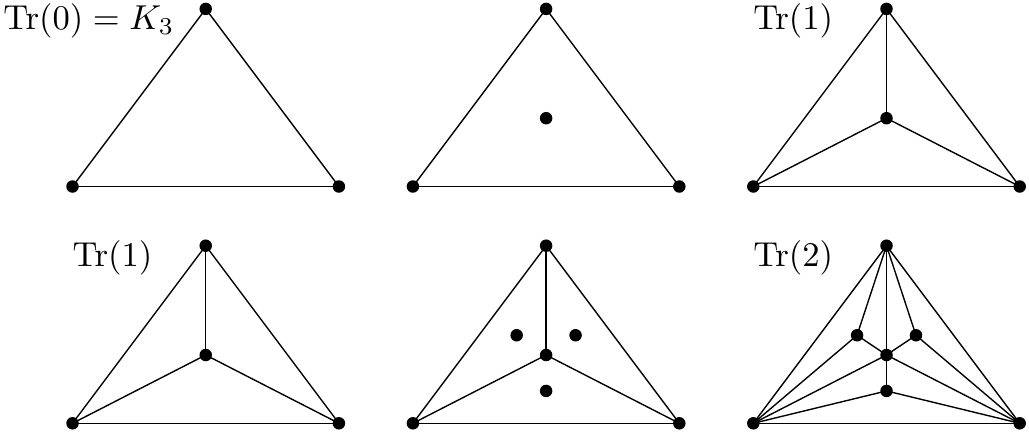}
 \caption{The iterative construction of $\Tr(2)$.}
 \label{Def Tr(n)}
\end{figure}

\noindent
{\bf Universal outerplanar graph $\UOP(n)$:}\\
A {\it universal outerplanar graph} $\UOP(n)$ is defined as follows: $\UOP(1)$ is a triangle. An edge on the outer face is called an {\it outer} edge.
For $k>1$, $\UOP(k)$ is an outerplanar graph that is a supergraph of $\UOP(k-1)$ obtained by introducing, for each outer edge $e=xy$, a new vertex $v_e$ and new edges: $v_e x$ and $v_ey$. Then the set of outeredges of $\UOP(k)$ is $\{v_ex, v_ey: ~ e=xy \mbox{ is an outeredge of } \UOP(k-1)\}.$\\

\noindent
{\bf Triangulated Grid $\Gr (n)$:}\\
Let a {\it triangulated grid} be a graph $\Gr(n)=(V,E)$ with $V=[n]\times[n]$ and $(k,j)(k',j')\in E$ if and only if either ($k=k'$ and $|j-j'|=1$) or ($|k-k'|=1$ and $j=j'$) or ($k=k'-1$ and $j=j'-1$) or ($k=k'+1$ and $j=j'+1$).
We define left, right, top, and bottom sides of the grid as subsets of vertices 
$[n]\times \{1\}$, $[n]\times \{n\}$, $\{1\}\times [n]$, and $\{n\}\times [n]$ respectively.\\

\noindent
{\bf Fish:}\\
A graph $G$ is called a {\it fish} and denoted $F_{x,y}$ if $V(G)=\{x, y\}\cup S$, where $S\cap \{x,y\} = \emptyset$, $x$ and $y$ are each adjacent to each vertex in $S$, $S$ induces a path in $G$, and $xy$ is an edge.
We call $S$ the set of {\it spine vertices}, $G[S]$ is called the {\it spine}, $xs, ys$ are called {\it ribs}, $s\in S$, and the paths $x,s,y$ of length~$2$ are called {\it double ribs}.
Sometimes we say that a fish $F_{x,y}$ {\it hangs} on an edge $xy$.
In an edge-colored fish, a double rib is called {\it bicolored} if there are different colors used on two edges of this double rib.
We will call two double ribs $x,s,y$ and $x,s',y$, with $s\neq s'$, $s,s'\in S$, {\it identically bicolored}, if the same color is used on both of the edges $xs$ and $xs'$, and a different color is used on both of the edges $sy$ and $s'y$.
Note that for any positive integers $m$ and $k$ and for any edge $xy \in E(\Tr(m))$, there is a fish on $xy$ in $\Tr(m+k)$ with $k$ spine vertices.
Indeed, consider an inner face $xyz$ in $\Tr(m)$.
We can pick spine vertices $s_1,\ldots,s_k$ such that $s_i\in V(\Tr(m+i)-\Tr(m+i-1))$, $i\in\{1,\dots,k\}$ and such that $s_i$ is inserted in the face $xys_{i-1}$ of $\Tr(m+i-1)$, $i=1,\ldots,k$, $s_0=z$.\\

\begin{figure}[h]
 \centering
 \includegraphics{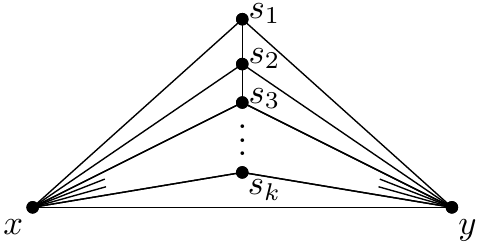}
 \caption{A fish $F_{x,y}$ with $k$ spine vertices.}
 \label{DefFish}
\end{figure}

\section{Proof of Theorem~\ref{good2}}\label{sec:good-2}
\noindent
Theorem~\ref{good2} follows immediately from the following lemmas.

The following proof closely resembles the Hex-lemma~\cite{Ga}. 

\begin{lemma}\label{path}
 Let $G$ be a near-triangulation with outer cycle $C$, that is, $G$ is a planar graph with outer face boundary $C$ and each other face is bounded by a triangle.
 Let $a,b,c,d$ be vertices on $C$ in clockwise order dividing the edges of $C$ in four paths $C(a,b),C(b,c),C(c,d),C(d,a)$, respectively.
 If the edges of $G$ are colored red and blue, then either there is a blue path from $C(a,b)$ to $C(c,d)$ or a red path from $C(b,c)$ to $C(d,a)$ (or both).
\end{lemma}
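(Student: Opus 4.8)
The plan is to establish the dichotomy by assuming there is no blue path from $C(a,b)$ to $C(c,d)$ and constructing a red path from $C(b,c)$ to $C(d,a)$, following the spirit of the \emph{Hex-lemma}. First I would let $A$ be the set of all vertices reachable from $V(C(a,b))$ along blue edges. Since $A$ is closed under blue edges, every edge with exactly one endpoint in $A$ is red; that is, the entire edge cut $\partial A$ between $A$ and $V(G)\setminus A$ is red. By the standing assumption $A$ contains $V(C(a,b))$ but is disjoint from $V(C(c,d))$, and because $C(a,b)$ is a path and $A$ is built by blue connectivity, $A$ induces a connected subgraph of $G$.

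Next I would use the near-triangulation structure to convert the red cut $\partial A$ into a red path. Drawing $G$ in the disk bounded by $C$, the connected region occupied by $A$ has a frontier curve $\gamma$ that crosses only edges of $\partial A$, hence only red edges. Since every inner face is a triangle, as $\gamma$ passes from one face to the next the two cut edges it crosses inside a common triangle share a vertex: either they share their endpoint in $A$ (a triangle with one vertex outside $A$) or their endpoint outside $A$ (a triangle with one vertex in $A$). Thus the red edges crossed by $\gamma$, listed in the order they are met, form a red walk, from which I would extract a red path. Tracking the endpoints, $b\in V(C(a,b))\subseteq A$ while $c\in V(C(c,d))$ lies outside $A$, so $\gamma$ starts at a transition edge on $C(b,c)$; symmetrically $a\in A$ and $d\notin A$ force $\gamma$ to end on $C(d,a)$. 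Hence the extracted red path joins $C(b,c)$ to $C(d,a)$, as required.

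The main obstacle is the topological bookkeeping needed to make ``the frontier is a single crosscut with the right endpoints'' rigorous. In particular, $A$ is connected but its complement need not be, so $\partial A$ may a priori split into several frontier curves, and I would have to argue that the component of the frontier separating $C(a,b)$ from $C(c,d)$ is the relevant crosscut and that it terminates on $C(b,c)$ and $C(d,a)$ rather than closing up inside the disk. A clean way to avoid this case analysis is to phrase the argument as a winding-number (parity) count: letting $B$ be the red-reachable set from $C(b,c)$ and recording each vertex by the pair (is it in $A$?, is it in $B$?), one checks that no edge can flip both coordinates, since an edge flipping the first is red and one flipping the second is blue. The vertex labels therefore give a homomorphism of $G$ to a $4$-cycle whose four vertices are these label pairs. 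The four corners $a,b,c,d$ map to the four distinct labels in cyclic order, so the image of $C$ winds once around this $4$-cycle; but each triangular face contributes winding number $0$ (a closed walk of length $3$ cannot wind around a $4$-cycle), and additivity over the triangulated disk forces total winding $0$, a contradiction that rules out the ``both colors fail'' scenario and yields the stated dichotomy.
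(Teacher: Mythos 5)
Your proposal is correct, and it actually contains two arguments. The first (the blue-reachable set $A$, its all-red edge boundary, and the observation that in a triangulated disk consecutive cut edges along the frontier share a vertex) is essentially the paper's proof: the paper phrases the same idea dually, taking a minimal red edge-cut separating $C(a,b)$ from $C(c,d)$, noting it is a cycle in the dual graph through the outer-face vertex, and using the triangle faces to see that consecutive dual edges correspond to primal edges sharing a vertex. The topological bookkeeping you flag (why the frontier is a single crosscut landing on $C(b,c)$ and $C(d,a)$, and why a sequence of pairwise-consecutively-incident edges yields an actual path) is glossed over in the paper as well, so you are right to be uneasy there; the honest fix is that the union of the cut edges is a connected red subgraph meeting both $C(b,c)$ and $C(d,a)$, since the cut contains an edge of $C$ on each of those two arcs. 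Your second argument, via labels $(\mathbbm{1}_A,\mathbbm{1}_B)$ and a winding-number count around the $4$-cycle of labels, is a genuinely different route not taken in the paper: it is the Sperner/degree-style proof of the Hex lemma, and it buys you a fully combinatorial argument with no curve-tracing at all (no edge flips both coordinates since such an edge would be both red and blue; each triangle contributes winding $0$ because a closed $3$-step walk cannot have displacement $\pm 4$; the boundary winds once because each of the four arcs of $C$ is confined to one edge of the label $4$-cycle). The only cosmetic imprecision is calling the label map a homomorphism to $C_4$ --- edges may map to a single label --- but that does not affect the winding computation. Either of your two arguments suffices.
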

\begin{proof}
 Suppose there is no blue path from $C(a,b)$ to $C(c,d)$.
 Then the red graph contains a minimal edge-cut separating $C(a,b)$ and $C(c,d)$.
 A minimal edge-cut in $G$ is a cycle $C'$ in the dual graph $G^*$.
 This cycle $C'$ must contain the vertex $v^*$ corresponding to the outer face of $G$.
 Since $G$ is a near-triangulation, it follows that any two consecutive edges of $C'$ (except the two edges incident with $v^*$) correspond to two edges in $G$ that are incident with the same vertex.
 Thus the edges in $G$ corresponding to the edges in $C'$ contain a red path path from $C(b,c)$ to $C(d,a)$.
\end{proof}

\begin{corollary}
 Any path is planar unavoidable, even in a class of planar graphs of bounded degrees (in fact of maximum degree at most $6$).
\end{corollary}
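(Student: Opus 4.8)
The plan is to take the triangulated grid $\Gr(n)$ as the host graph and feed it straight into \cref{path}. The iterated triangulation $\Tr(m)$ would also arrow long paths, but its vertex degrees are unbounded, so to get the maximum-degree-$6$ conclusion I would instead work with $\Gr(n)$. First I would check the two structural hypotheses. For the degree bound: every interior vertex $(k,j)$ has exactly the six neighbours $(k,j\pm 1)$, $(k\pm 1,j)$, $(k-1,j-1)$, $(k+1,j+1)$, so $\Delta(\Gr(n))=6$, while boundary and corner vertices have smaller degree. For the near-triangulation property: each unit cell is split by its main diagonal into the two triangles $(k,j),(k,j+1),(k+1,j+1)$ and $(k,j),(k+1,j),(k+1,j+1)$, and the anti-diagonal is not an edge, so every bounded face of $\Gr(n)$ is a triangle while the outer face is bounded by the boundary cycle. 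Hence $\Gr(n)$ is a near-triangulation in the sense required by the lemma.

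Next I would designate the four corners $a=(1,1)$, $b=(1,n)$, $c=(n,n)$, $d=(n,1)$ in clockwise order, so that the four arcs $C(a,b),C(b,c),C(c,d),C(d,a)$ are precisely the top, right, bottom, and left sides of the grid. Given any red/blue edge-coloring of $\Gr(n)$, \cref{path} then produces either a blue path joining the top side $C(a,b)$ to the bottom side $C(c,d)$, or a red path joining the right side $C(b,c)$ to the left side $C(d,a)$. Matching the four sides to the arcs in the correct cyclic order so that the two alternatives of the lemma become the two crossing directions is the one bookkeeping point I would be careful about.

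The key quantitative step is to bound such a crossing path from below. A top-to-bottom path has endpoints with first coordinates $1$ and $n$; since each edge of $\Gr(n)$ changes the first coordinate by at most $1$, the path uses at least $n-1$ edges and so has at least $n$ vertices. The identical argument on the second coordinate shows a left-to-right path also has at least $n$ vertices. In either case we obtain a monochromatic path on at least $n$ vertices, and any such path contains $P_n$ as a subpath. Thus $\Gr(n)\rightarrow P_n$, and since $\Gr(n)$ is planar with maximum degree $6$, every path is planar unavoidable within the class of planar graphs of maximum degree at most $6$.

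I do not anticipate a genuine obstacle, since \cref{path} carries the combinatorial weight; the proof reduces to the three verifications above. The most error-prone part is purely notational, namely keeping the grid's coordinate conventions straight so that (i) the four sides are correctly identified with $C(a,b),\dots,C(d,a)$ and (ii) the coordinate-monotonicity bound is applied to the coordinate that actually changes along the crossing. Everything else is immediate from the lemma.
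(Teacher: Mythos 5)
Your proposal is correct and follows exactly the paper's route: apply Lemma~\ref{path} to the triangulated grid $\Gr(n)$ with the four sides playing the roles of $C(a,b),C(b,c),C(c,d),C(d,a)$, and observe that any crossing path must have at least $n$ vertices. The paper states this in one sentence and leaves the degree bound, the near-triangulation check, and the coordinate-monotonicity length bound implicit; your write-up just makes those verifications explicit.
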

\begin{proof}
 If the edges of the triangulated grid $\Gr(k)$ are colored red or blue, then there is a monochromatic $P_k$ by Lemma~\ref{path}, where the paths $C(a,b),C(b,c),C(c,d),C(d,a)$ correspond to the top, right, bottom, and the left sides of the grid.
\end{proof}

The above gives planar graphs of bounded maximum degree that arrow arbitrarily long paths, which however have large tree-width.
Complementary, we can find planar graphs of tree-width~$2$ that also arrow arbitrarily long paths, where however the maximum degree is large.

\begin{lemma}\label{lem:path}
 Any path is outerplanar unavoidable.
 In particular, for any positive integer $n$, $\UOP(n^2) \rightarrow P_n$.
\end{lemma}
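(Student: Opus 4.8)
The plan is to prove the quantitative statement $\UOP(n^2) \rightarrow P_n$; since every $\UOP(k)$ is outerplanar by construction, this immediately gives that $P_n$ is outerplanar unavoidable for all $n$. I would build a monochromatic path one vertex at a time, exploiting the recursive ``cherry'' structure of $\UOP$. Recall that $\UOP(k)$ is obtained from $\UOP(k-1)$ by adding, on every outer edge $e=xy$, a new vertex $v_e$ together with the two edges $v_ex,v_ey$, and that $v_ex,v_ey$ become the new outer edges. Thus above each outer edge $xy$ sits a triangle $x\,v_e\,y$ whose two upper sides are the outer edges on which the construction recurses. The central object I would track is a monochromatic path that is \emph{anchored}, meaning one of its endpoints is an endpoint of the current outer edge; anchoring is exactly what lets a path be continued into the freshly added cherries.

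First I would fix a red/blue coloring and induct on the target length $\ell$, proving that after enough cherry-levels there is a monochromatic anchored path on $\ell$ vertices. The step from $\ell-1$ to $\ell$ uses one cherry. Suppose we hold a red path $Q$ on $\ell-1$ vertices anchored at $x$ on the outer edge $xy$, and consider the cherry vertex $v=v_{xy}$ with edges $vx,vy$. If $vx$ is red, then $Q+v$ is a red path on $\ell$ vertices now anchored at $v$ on the outer edge $vy$, so we gain a vertex at the cost of one level. The difficulty is that the adversary may color $vx$ blue, blocking the red extension; then one must pivot to $vy$ and the chord $xy$, or switch to building a blue path, using that the cherry offers \emph{two} new outer edges $vx,vy$, hence two independent sub-branches to recurse into. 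The reason for using $\UOP$ rather than a single fan — which, by an alternating-block coloring with all spokes one color, does \emph{not} arrow long paths — is precisely this branching: when the red extension is blocked at a cherry, the blue edge $vx$ seeds a blue anchored path in one sub-branch while red is carried into the other, so the adversary cannot keep both colors short at once.

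The quantitative heart, and the source of the square, is the accounting that converts blocked cherries into guaranteed growth. I would show that raising the guaranteed anchored length from $\ell-1$ to $\ell$ costs a block of about $2\ell$ consecutive cherry-levels: within such a block, either some color extends an anchored path $\ell$ times (yielding the path directly), or the repeated color switches force two anchored paths of the same color to arrive at a common cherry apex $v$ from the two sub-branches hanging on $vx$ and $vy$, where, being otherwise disjoint, they concatenate through $v$ into one monochromatic path whose length is the \emph{sum} of the two reaches. Summing the per-step costs $\sum_{\ell\le n}2\ell=\Theta(n^2)$ shows that $n^2$ levels suffice to reach length $n$, giving $\UOP(n^2)\rightarrow P_n$ (the three edges of the initial triangle carry identical recursive structures, so it suffices to reason about outer edges as the construction proceeds). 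I expect the main obstacle to be exactly this charging step: choosing a potential — tracking, for both colors, the longest anchored reach at each endpoint of the current outer edge — that provably increases, and carrying out the case analysis at a blocked cherry so that a same-color concatenation at some apex is guaranteed before the budget of $\approx 2\ell$ levels is spent.
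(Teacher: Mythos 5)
Your setup is the right one---the cherry recursion, paths ``anchored'' at an endpoint of the current outer edge, a two-coordinate bookkeeping for the two colors, and a quadratic total cost---but the proposal stops exactly where the proof has to happen. You write that the main obstacle is ``choosing a potential \dots that provably increases, and carrying out the case analysis at a blocked cherry,'' and indeed neither is done: the claim that within $\approx 2\ell$ consecutive levels either one color extends $\ell$ times or two same-colored anchored paths must arrive at a common apex and concatenate is asserted, not argued. It is also not the mechanism that actually closes the argument. Nothing in your description forces the two paths meeting at an apex to have the same color, to be long, or to appear within the stated budget; an adversary can keep switching which color is blocked, and your accounting gives no monotone quantity that rules this out. (A smaller issue: $\sum_{\ell\le n}2\ell$ exceeds $n^2$, so even if the charging worked you would not get the stated bound $\UOP(n^2)\rightarrow P_n$.)

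The paper closes this gap with a different and cleaner device. It maintains the invariant that the current outer edge $e_i=uv$ has its two endpoints in \emph{different blue components} of the whole graph (if no such edge exists at level $n$, two rank-$n$ vertices at distance $n$ already lie in one blue component and give a blue $P_n$ outright---a starting case your proposal does not address). At the next cherry $w$ over $uv$, this invariant forces at least one of $uw,vw$ to be red, and a two-case analysis shows that the lexicographic potential $\bigl(|R(e)|,|B(e)|\bigr)$---longest red path ending in $e$ at its high-rank endpoint, longest blue path ending there inside $G(\mathrm{in},e)$---strictly increases while the invariant is preserved. Since there are $(n-1)n+1$ levels from $n$ to $n^2$ and only $(n-1)n$ pairs with both coordinates below $n$, one coordinate reaches $n$. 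No concatenation of two paths at an apex is ever needed. So: right skeleton, genuinely missing core; to repair your write-up you would essentially have to import the different-blue-components invariant and the lexicographic monovariant, at which point you have the paper's proof.
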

\begin{proof}
 We shall show that $\UOP(n^2)\rightarrow P_n$.
 Let $G = G_{n^2} = \UOP(n^2)$ and let it be edge-colored red and blue.
 We see that each edge of $G$ is on the outer face of $G_i=\UOP(i)$ for some $i \leq n^2$, where $G_1\subseteq G_2\subseteq \cdots \subseteq G_{n^2}$ as in the definition of the universal outer planar graph. 
 Consider the unique outerplanar embedding of $G$ and for each edge $e$, consider $G_i$ such that $e$ is on the outer face of $G_i$.
 For a vertex let its rank be the least $i \in \{1,\ldots,n^2\}$ for which it is in the vertex set of $G_i$.
 For each edge $e$, we define graphs $G({\rm out},e)$ and $G({\rm in}, e)$ such that $G = G({\rm out}, e) \cup G({\rm in}, e)$, where $G({\rm out}, e)$ and $G({\rm in}, e)$ share only the edge $e$ and no vertices except for the endvertices of $e$.
 We require in addition that $G({\rm in}, e)$ contains $G_1$ as a subgraph, see Figure~\ref{fig:outerplanar-definitions}.
 Observe that among vertices of rank $i$ in $G$ there are two at distance $i$ in $G$, $i=1,\ldots,n^2$.
 
 \begin{figure}[t]
  \centering
  \includegraphics{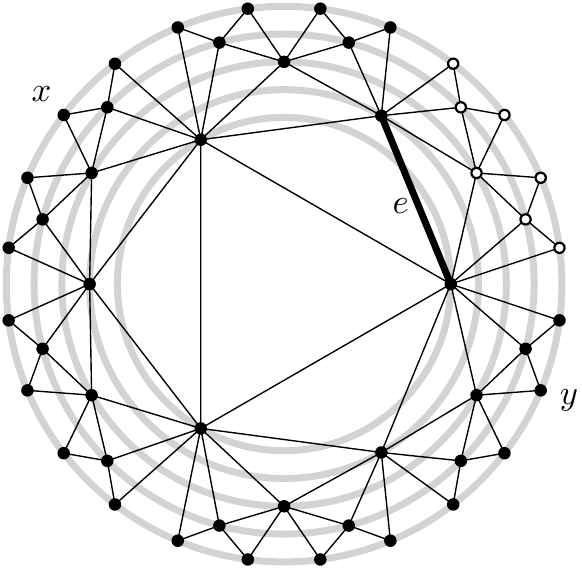}
  \caption{The universal outerplanar graph $\UOP(5)$.
   Vertices with the same rank lie on concentric circles.
   For the thick edge $e$, the vertices in $G({\rm in},e)$ are shown in black.
   The two vertices $x,y$ in $\UOP(5)$ have distance~$5$.}
  \label{fig:outerplanar-definitions}
 \end{figure}
 
 For an edge $e$ in $G_i$ with endvertex $v$ of rank $i$, we define the following.
 Let $R(e)$ be a longest red path in $G_i$ with last edge $e$ and last vertex $v$.
 Let $B(e)$ be a longest blue path in $G({\rm in},e)$ with last vertex $v$.
 We shall write $e>e'$ for two edges of $G$ if $|R(e)| > |R(e')|$, or $|R(e)| = |R(e')|$ and $|B(e)| > |B(e')|$.\\
 
 Consider the edges in $G_n$.
 Assume that the endvertices of each such edge belong to the same blue component.
 Then for any two vertices of rank $n$, there is a blue path joining them. 
 Since there are two such vertices at distance at least $n$ in $G$, we see that there is a blue path on $n$ edges, and we are done. 
 So, assume that $e_n$ is an outer edge of $G_n$ such that its endvertices belong to different blue components of $G$. 
 Assume we constructed a sequence of edges $e_n < e_{n+1}< \cdots < e_{n+i}$ of outer edges in $G_n,\ldots,G_{n+i}$ respectively such that the endvertices of each of these edges belong to different blue components of $G$.
 Consider $e=e_{n+i}$, we shall construct $e_{n+i+1}$.
 Let $e=uv$ with $v$ of rank $n+i$ and let $e', e''$ be two adjacent outer edges of $G_{n+i+1}$ that are incident to $u$ and $v$, respectively.
 Let $e'=uw$ and $e''=vw$ where $w$ has rank $n+i+1$.
 Note that either ($u$ and $w$) or ($v$ and $w$) are in different blue components in $G$, otherwise $u$ and $v$ would have been in the same blue component.
 See Figure~\ref{fig:outerplanar-cases} for illustrations.\\
 
 \begin{figure}[t]
  \centering
  \includegraphics{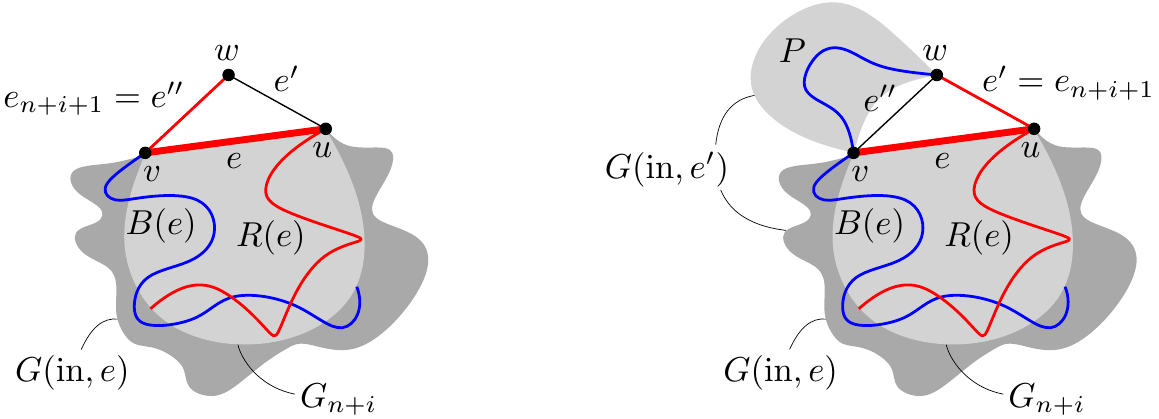}
  \caption{Illustrations of Case 1 (left) and Case 2 (right) in the proof of Lemma~\ref{lem:path}.}
  \label{fig:outerplanar-cases}
 \end{figure} 
 
 Case 1. $v$ and $w$ are in different blue components.
 Then $e'' = vw$ is red and the path $R(e) \cup vw$ is a red path in $G_{n+i+1}$ of length $|R(e)|+1$ ending in $e''$ at vertex $w$.
 Then let $e_{n+i+1} = e''$.
 We see that $e'' > e$. \\
 
 Case 2. $v$ and $w$ are in the same blue component and $u$ and $w$ are in different blue components.
 Then $e' = uw$ is red and the path $(R(e) - uv) \cup uw$ is a red path of length $|R(e)|$ in $G_{n+i+1}$ ending with $e'$ at vertex $w$.
 Since $v$ and $w$ are in the same blue component, there is a blue path $P$ of length $q$, $q\geq 1$, between them in $G({\rm out}, e'')$.
 The union of $P$ and the blue path $B(e)$ ending at $v$ in $G({\rm in}, e)$ forms a blue path ending at $w$ in $G({\rm in}, e')$.
 Let $e_{n+i+1}= e'$.
 We see that $e' > e$.\\
  
 We can continue in this manner until rank $n^2$, i.e., we create a desired sequence $e_n < \cdots < e_{n^2}$. 
 Note that $|R(e_i)| \geq 1$ and $|B(e_i)| \geq 0$ for $i = n,\ldots,n^2$, and $(|R(e_i)|,|B(e_i)|) \neq (|R(e_j)|,|B(e_j)|)$ whenever $i \neq j$.
 As there are exactly $n^2-n+1 = (n-1)n + 1$ edges in this sequence, there exists some $i \in \{n,\ldots,n^2\}$ with $|R(e_i)| \geq n$ or $|B(e_i)| \geq n$, proving that there is a red or a blue path of length at least $n$.
\end{proof}

\begin{lemma}\label{Broomhd}
 Any generalized broom is planar unavoidable. 
\end{lemma}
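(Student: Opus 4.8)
The plan is to build the broom from two monochromatic ingredients of the \emph{same} colour: a long path, supplied by a two-dimensional path-forcer, and a large star, supplied by a fish. Write the generalized broom as a path $v_0v_1\cdots v_p$ together with $s$ additional leaves at a centre $v_c$; to find a monochromatic copy it suffices to find a monochromatic path of length at least $p$ together with a monochromatic star $K_{1,s}$ of the same colour centred at an interior vertex of that path, all on pairwise disjoint vertices. For the path I would take the triangulated grid $\Gr(L)$ with $L\gg p$: by the corollary to Lemma~\ref{path}, every red/blue colouring contains a monochromatic path $Q$ crossing the grid, and for $L$ large I may assume $|Q|\ge 2p$. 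For the stars I would hang, on \emph{each} edge of $\Gr(L)$, a fish with $k\gg s$ spine vertices, inserting the spine vertices into incident triangular faces exactly as in the construction of $\Tr(n)$; this keeps $G$ planar, and the crucial feature of a fish is that every spine vertex is adjacent to \emph{both} endpoints of the edge it hangs on.

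Given a colouring, let $Q=q_0q_1\cdots q_T$ (with $T\ge 2p$) be a monochromatic crossing path, say of colour $\chi$, and consider its central vertices together with the fishes hanging on the edges of $Q$. If some central vertex $q_i$ meets at least $s$ ribs of colour $\chi$, then these ribs run to fresh spine vertices and form a $\chi$-coloured $K_{1,s}$; combined with the two sub-paths of $Q$ on either side of $q_i$ (both long, since $q_i$ is central) this yields a $\chi$-coloured broom, and we are done.

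The main obstacle is colour-matching: the colouring may force the long path in colour $\chi$ while colouring every rib along it with the opposite colour $\bar\chi$, so that no $\chi$-star is available at any vertex of $Q$. A plain star at each vertex does not survive this — the colouring that makes all grid edges blue and all star-edges red leaves no broom at all, since the red edges are then isolated stars with no long red path. This is exactly where a fish, rather than a plain star, is needed. In the remaining case each central $q_i$ meets fewer than $s$ ribs of colour $\chi$, hence at least $k-s$ ribs of colour $\bar\chi$ into the fish on the edge $q_iq_{i+1}$, and likewise for $q_{i+1}$; since both apexes send $\bar\chi$-ribs to at least $k-s$ of the $k$ spine vertices, some spine vertex $\sigma_i$ has \emph{both} of its ribs $\bar\chi$ (a $\bar\chi$-monochromatic double rib). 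The detours $q_i-\sigma_i-q_{i+1}$ then splice into a single $\bar\chi$-coloured path $q_0-\sigma_0-q_1-\sigma_1-\cdots-q_T$ of length $2T\ge p$ that snakes along $Q$ through fresh spine vertices, and at a central vertex $q_{i_0}$ the remaining $\bar\chi$-ribs (at least $k-s-1\ge s$ of them, all to spine vertices unused by this path) provide a $\bar\chi$-coloured $K_{1,s}$, yielding a $\bar\chi$-coloured broom.

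I expect the technical core to lie in this second case and in the bookkeeping around it: choosing $L$ and $k$ as explicit functions of $p$ and $s$ so that $Q$, and hence the snaking path, is long enough to seat the centre at the prescribed interior position $v_c$; ensuring the star vertices, the two path-legs, and the snaking detours remain pairwise disjoint and on fresh vertices; and arranging the simultaneous planar insertion of a fish on every grid edge. Once these are settled one obtains a planar graph $G$ with $G\rightarrow B$, proving that every generalized broom is planar unavoidable.
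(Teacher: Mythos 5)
Your proposal is correct and follows essentially the same strategy as the paper's proof: first force a long monochromatic path, hang fishes on its edges, and then either find a same-coloured star at a path vertex or splice opposite-coloured double ribs into an opposite-coloured path carrying its own star. The only difference is cosmetic: the paper works entirely inside the iterated triangulation $\Tr(10n)$ (which already contains both the path-forcing subgraph $\UOP(8n)$ and a fish on every edge), whereas you build the host graph by hand from a triangulated grid with fishes attached, which requires the extra planarity bookkeeping you note but changes nothing in the argument.
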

\begin{proof}
 Let $H$ be a union of $P_{2k+1}$ and $K_{1,k}$ that share only their center vertices.
 Note that any generalized broom on at most $k$ vertices is a subgraph of $H$.
 Let $n$ be sufficiently large, say $n\geq 10k^2$.
 Consider $G=\Tr(10n)$ colored red and blue.
 Since $\UOP(8n)\subseteq \Tr(8n)$, we see that there is monochromatic path $P$ on edges $e_1, \ldots, e_n$ in order in a two-edge colored $\Tr(8n)$, say $P$ is red. 
 Consider a set $\cF$ of $n-2k$ fishes hanging on $e_{k+1}, e_{k+2}, \ldots, e_{n-k}$ respectively such that the spines of fishes from $\cF$ are pairwise disjoint and each fish has at least $4k$ spine vertices.
 If at least one of these fishes contains a red star of size $k$ centered at a vertex of $P$, we have a red $H$.
 Otherwise, each fish in $\cF$ contains at least $2k$ blue double ribs.
 The union of blue subgraphs of fishes from $\cF$ clearly contains a blue copy of $H$.
\end{proof}

\begin{lemma}\label{lem:height-2}
 Any tree of radius $2$ is planar unavoidable.
\end{lemma}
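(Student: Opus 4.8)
The plan is to reduce to the ``complete'' radius-$2$ tree. If $H$ has center $c$, children $v_1,\dots,v_m$, and each $v_i$ carries some leaves, then $H$ is a subgraph of the tree $T^*=T^*(m,\ell)$, where $\ell$ is the maximum number of leaves at a child: a center with $m$ children each carrying exactly $\ell$ leaves. So it suffices to find a monochromatic $T^*$. I would look for $T^*$ inside a deep iterated triangulation $\Tr(N)$, using the fish-hanging property to build a \emph{nested fish gadget}: fix an edge $xy$, hang a fish $F_{x,y}$ with a huge spine $s_1,\dots,s_K$ (the candidate middle vertices, each adjacent to both poles $x$ and $y$), and on each rib $xs_j$ hang a further, vertex-disjoint fish whose spine $w^j_1,\dots,w^j_M$ are candidate leaves of $s_j$ (each adjacent to $x$ and to $s_j$). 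For $N$ large all these fishes exist simultaneously with pairwise disjoint spines.

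Then I would run a two-level pigeonhole on the colors. Since $M\ge 2\ell$, every $s_j$ is \emph{red-rich} (at least $\ell$ red leaf-edges $s_jw^j_r$) or \emph{blue-rich}, and I classify $s_j$ by this together with the color pattern of its double rib $x\,s_j\,y$. Two clean wins fall out immediately. If the double rib $x\,s_j\,y$ is bicolored, then whichever of $x,y$ meets $s_j$ in its rich color serves as a center; and if $s_j$ is simultaneously red- and blue-rich it likewise yields a usable middle for some pole, since its double rib cannot be both all-red and all-blue. Thus a spine vertex fails to be usable only in the \emph{color-flip} case: a monochromatic double rib paired with the opposite rich color, that is, $xs_j$ and $ys_j$ both red while $s_j$ is blue-rich (or the swap). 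Whenever at least half the $s_j$ avoid the flip, a final pigeonhole over the four choices (center in $\{x,y\}$, color in $\{\text{red},\text{blue}\}$) produces at least $m$ middles with a common center and color, hence a monochromatic $T^*$.

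The whole difficulty is the color-flip case, which I expect to be the main obstacle. The flipped spine vertices carry large stars in one color but are joined to both poles only in the other color, and, lying on pairwise disjoint sub-fishes, their stars share no common neighbor that could act as a center in the star color. This obstruction is genuine on a bare tree: coloring a rooted tree of any depth by levels in strictly alternating colors leaves no monochromatic radius-$2$ tree at all, since every center sees its two descending levels in different colors. The resolution must therefore exploit exactly the extra adjacencies that a fish has and a tree does not --- the second pole $y$, the double ribs, and the ``shortcut'' edges $xw^j_r$ joining a leaf candidate back to the pole. Concretely, I would relocate the prospective center onto a flipped spine vertex $s_j$ (which is itself a pole of its sub-fish, hence has its own controllable neighborhood) and use the shortcut edges $xw^j_r$ together with identically bicolored double ribs $x\,w^j_r\,s_j$ to realign the two layers into a single color, adding one further fish layer beneath the $w^j_r$ should the flip recur, and then pigeonhole as before. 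Making this relocation terminate --- arguing that the flip cannot persist consistently through the nested fish because the shortcut edges destroy the alternating-by-level pattern --- is the crux, and is precisely what the bicolored and identically bicolored double-rib machinery set up in the definitions is designed to supply.
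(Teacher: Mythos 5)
Your setup is sound and close in spirit to the paper's: both reduce to the complete radius-$2$ tree (the paper's perfect $k$-ary tree), both work inside a deep iterated triangulation using the large common neighborhoods of adjacent pairs, and both run a two-level pigeonhole. You have also correctly isolated the real difficulty, namely the ``color-flip'' configuration where a prospective middle vertex is rich in one color but attached to both poles in the other. The problem is that your proposal stops exactly there: the resolution of the flip case is described only as a plan (``relocate the center, use the shortcut edges, add one further fish layer should the flip recur''), and you yourself concede that making this relocation terminate is the crux and is not argued. As written, the recursion could in principle flip at every level, and nothing in your sketch rules that out. So there is a genuine gap, and it sits at the only step that requires a new idea.

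For comparison, the paper closes this gap with a two-step argument that terminates by design rather than by an iterated relocation. First, a preliminary Claim handles the bicolored double ribs: if some $2k$-set $L$ is simultaneously the leaf set of a red star at $x$ and a blue star at $y$, then each $z\in L$ has $2k$ private deeper neighbors, pigeonhole gives each $z$ a monochromatic $k$-star avoiding $L$, and $k$ of these agree in color, completing the tree with $S_r$ or $S_b$. This Claim forces, for every adjacent pair $x,y$, a monochromatic $K_{2,k}$ denoted $K(x,y)$ among their common neighbors. Second, take $K(x,y)$ blue; if every middle vertex $z$ of it admits a blue $K(z,a)$ you are done, and otherwise some $z$ has \emph{all} its monochromatic $K(z,\cdot)$ red --- in particular $K(z,y)$ is red with leaves $w_1,\dots,w_k$, and each $K(z,w_i)$ is red, which yields a red tree rooted at the \emph{second pole} $y$. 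The universal quantifier ``all $K(z,\cdot)$ are red'' is what kills the recursion after one flip; that is the ingredient your relocation scheme is missing, and without it (or a substitute monovariant) your proof is incomplete.
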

\begin{proof}
 Let $H$ be a perfect $k$-ary tree of radius $2$. 
 Consider $\Tr(19k)$ together with a fixed edge coloring in red and blue. 
 
 \begin{figure}[tb]
  \centering
  \includegraphics{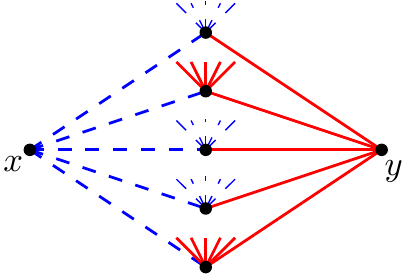}
  \caption{Two monochromatic stars of different colors with the same leaf-set.}
  \label{TCase1}
 \end{figure}

 \begin{claim*}
  If there is a red star $S_r$ and a blue star $S_b$ on $2k$ edges in $\Tr(n)$, $n \leq 18k$, such that the stars have the same leaf-set $L$, then there is a monochromatic copy of $H$ in $\Tr(n+k)$.
 \end{claim*}

 Let $x,y$ denote the centers of $S_r$ and $S_b$, respectively.
 Each vertex $z \in L$ has at least $2k$ neighbors in $\Tr(n+k)$ that are not neighbors of any vertex in $L - z$.
 Hence, by pigeonhole principle $z$ is the center of a monochromatic star on $k$ edges, whose leaves have distance at least two to $L - z$, see Figure~\ref{TCase1}.
 At least $k$ of these monochromatic stars are of the same color that together with either $S_r$ or $S_b$ form a monochromatic copy of $H$.
 This proves the Claim.

 \begin{figure}[tb]
  \centering
  \includegraphics[scale=0.75]{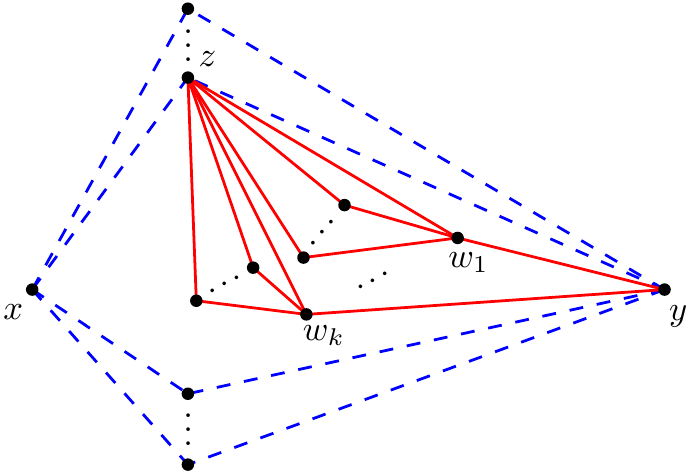}
  \caption{Part of a fish $F$ with $k$ blue double ribs, part of a fish $F_{x,s}$ with red double ribs, and part of a fish $F_{s,s'}$ between $s$ and $s'$ with monochromatic red double ribs.}
  \label{TCase2}
 \end{figure}

 Now consider any two adjacent vertices $x,y$ in $\Tr(n)$, $n \leq 12k$, and the set $L$ of their at least $6k$ common neighbors in $\Tr(n+6k)$.
 By the Claim, we may assume that fewer than $2k$ vertices of $L$ have a red edge to $x$ and a blue edge to $y$, and fewer than $2k$ vertices of $L$ have a blue edge to $x$ and a red edge to $y$.
 Each of the remaining at least $2k$ vertices in $L$ has its edges to $x$ and $y$ in the same color, and by pigeonhole principle we may assume that for at least $k$ of these vertices this the same color.
 We let $K(x,y)$ denote this monochromatic copy of $K_{2,k}$ in $\Tr(n+6k)$.

 Finally, consider two adjacent vertices $x,y$ in $\Tr(0)$.
 Say that $K(x,y) \subset \Tr(6k)$ is blue.
 If for every vertex $z$ in $K(x,y) - \{x,y\}$ we find a monochromatic $K(z,a) \subset \Tr(18k)$ in blue for some $a$, then there is a blue copy of $H$, as desired.
 So assume that for at least one vertex $z$ in $K(x,y)- \{x,y\}$ all monochromatic $K(z,a)$ for some $a$ are red; see Figure~\ref{TCase2}.
 Then in particular $K(z,y) \subseteq \Tr(12k)$ is red with vertices $z,y$ and $w_1,\ldots,w_k$.
 Moreover, for each $i=1,\ldots,k$ the monochromatic $K(z,w_i) \subset \Tr(18k)$ is red.
 However, this gives a red copy of $H$ rooted at $y$; see Figure~\ref{TCase2}.
\end{proof}

\begin{lemma}\label{C4}
 A cycle $C_4$ is planar unavoidable.
 For $n \geq 16$, $\Tr(n) \rightarrow C_4$.
\end{lemma}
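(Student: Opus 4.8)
The plan is to reduce the statement to finding a monochromatic $K_{2,2}$ and then to exploit the nested fish structure of $\Tr(n)$ recalled in Section~\ref{definitions}. First observe that a monochromatic $C_4$ is exactly a monochromatic $K_{2,2}$: a red $C_4$ on vertices $u,s,v,s'$ is precisely a pair of common neighbors $s,s'$ of an edge $uv$ with all four edges $us,vs,us',vs'$ red, i.e. two red monochromatic double ribs on the edge $uv$ (and symmetrically for blue). Hence ``$\Tr(n)$ has no monochromatic $C_4$'' is equivalent to: for every edge $uv$, among the common neighbors of $u$ and $v$ at most one sends two red edges to $\{u,v\}$ and at most one sends two blue edges. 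In fish language, every edge carries at most one red and at most one blue monochromatic double rib, so all but at most two double ribs of any fish are bicolored.

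Next I would place a fish with many spine vertices on an edge $xy$ of the base triangle $\Tr(0)$; such a fish lives in $\Tr(k)$ for a small constant $k$. By the reduction above almost all double ribs $x,s_i,y$ are bicolored, and by pigeonhole a positive fraction share a bicolored type, say red to $x$ and blue to $y$. The feature of $\Tr(n)$ that a single fish lacks is that the centers reappear as common neighbors one level down: for consecutive spine vertices $s_i,s_{i+1}$ the two faces on the edge $s_is_{i+1}$ are $x s_i s_{i+1}$ and $y s_i s_{i+1}$, so $x$ and $y$ are themselves common neighbors of $s_i,s_{i+1}$, and a fish grown on $s_is_{i+1}$ has its first spine vertex adjacent to $x$. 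On the edge $s_is_{i+1}$ the center $x$ is then a red, and $y$ a blue, monochromatic double rib, so these are the unique ones and every further common neighbor of $s_i,s_{i+1}$ is bicolored. Descending once more to an edge of the form $xt$, one of $s_i,s_{i+1}$ is forced to be a monochromatic double rib on $xt$, and the aim is to produce a second one of the same color, closing a monochromatic $C_4$.

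The main obstacle is that a single fish is genuinely monochromatic-$C_4$-avoidable: if all its double ribs have one bicolored type and the spine edges alternate in color, then both color classes become friendship-type graphs (a star through $x$, respectively through $y$, together with a matching on the spine), and these contain no $C_4$. Consequently no argument confined to one fish, nor the naive descent that merely pushes the ``almost all bicolored'' conclusion one level deeper, can succeed. The coloring must instead be pinned down by combining the ``at most one monochromatic double rib'' constraints coming from several edges that share vertices, in particular the three edges incident to a common triangular face, whose inserted centers mutually serve as common neighbors of one another. Showing that these overlapping constraints, together with a case analysis on the bicolored types and on the colors of the spine edges and of the base edges, cannot be satisfied simultaneously is the heart of the proof.

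Finally I would account for the number of refinement levels consumed. Each descent replaces an edge of $\Tr(m)$ by a fish living in $\Tr(m+k)$ for a small constant $k$, and only a bounded number of nested fishes, on the order of three or four levels each with a handful of spine vertices, are needed to force the contradiction. Summing these constants yields $\Tr(n)\rightarrow C_4$ already for $n\ge 16$, giving the stated bound and hence that $C_4$ is planar unavoidable.
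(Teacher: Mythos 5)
Your opening reduction is sound and matches the paper's first step: a monochromatic $C_4$ through an edge $uv$ is exactly a pair of identically monochromatic double ribs on $uv$, so in a $C_4$-free coloring every edge carries at most one red and at most one blue monochromatic double rib, and all but at most two double ribs of any fish are bicolored. You are also right that a single fish admits a coloring with no monochromatic $C_4$, so extra structure is genuinely needed. But this is where the proposal stops being a proof: you write that showing the overlapping constraints ``cannot be satisfied simultaneously is the heart of the proof'' and then do not carry out that case analysis, nor do you verify that the proposed descent through nested fishes on the edges $s_is_{i+1}$ and $xt$ actually terminates in a contradiction rather than just reproducing the same ``almost all double ribs are bicolored'' situation one level down (a worry you yourself raise). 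The bound $n\ge 16$ is asserted by summing unspecified constants rather than derived. So the easy half (the reduction) is established and the hard half is named but not proved.

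The missing idea, as the paper executes it, is more economical than a multi-level descent. Take one fish $F_{x,y}$ with $15$ spine vertices and add a single degree-$3$ vertex inside each inner face bounded by two consecutive spine vertices; this graph already sits inside $\Tr(15)$. Since at most two of the $15$ double ribs are monochromatic, five consecutive ones are bicolored, and a short case analysis on the colors of $xs_1,\dots,xs_4$ and of the spine edges yields either a monochromatic $C_4$ inside the fish or a monochromatic (say red) inner triangular face $f$ such that every pair of vertices of $f$ is joined by a blue $P_3$ through a vertex of $F_{x,y}$ outside $f$ --- and, because $f$ itself is all red, also by a red $P_3$ through the third vertex of $f$. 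The extra degree-$3$ vertex $z$ placed in $f$ has two identically colored edges into $f$, giving a second $P_3$ of that same color between some pair of vertices of $f$ and hence a monochromatic $C_4$. This dichotomy (``monochromatic $C_4$ or a monochromatic face all of whose vertex pairs have identically colored common neighbors in both colors'') is the concrete step your write-up would need; the nested-fish descent you sketch would instead require its own, currently absent, termination argument.
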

\begin{proof}
 Consider the graph $G$ consisting of a fish $F_{x,y}$ hanging on edge $xy$ with $15$ spine vertices $s_1,\ldots,s_{15}$, and a vertex of degree three in each face of $F_{x,y}$ bounded by two spine vertices; see the left part of Figure~\ref{fig:C4-full-graph}.
 Note that $G \subset \Tr(15)$.
 Consider any fixed edge-coloring of $G$ in red and blue.
 
 \begin{figure}[tb]
  \centering
  \includegraphics{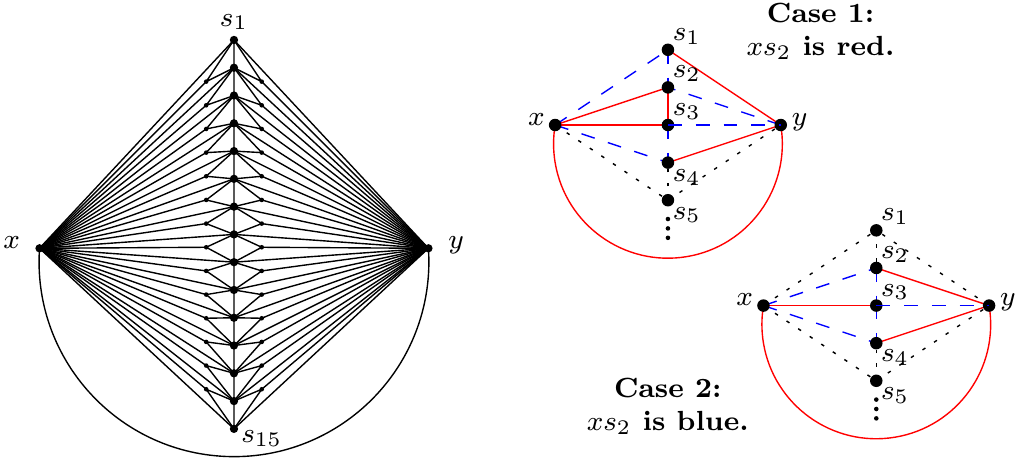}
  \caption{Left: A planar graph $G$ with $G \rightarrow C_4$.
   Right: Illustrations for the two cases in the proof of Lemma~\ref{C4}.}
  \label{fig:C4-full-graph}
 \end{figure}
 
 First we claim that $F_{x,y}$ contains a monochromatic $C_4$ or a monochromatic inner face $f$ such that any two vertices $u,v$ of $f$ have a common neighbor $w$ in $F_{x,y}$, not in $f$, such that edges $uw$ and $vw$ have the same color.
 To this end, consider the spine vertices $s_1,\ldots,s_{15}$ and the corresponding double ribs $x,s_i,y$, $i = 1,\ldots,15$.
 If $F_{x,y}$ contains no monochromatic $C_4$, at most two double ribs are monochromatic -- one red and one blue.
 Hence there are five consecutive spine vertices $s_i,\ldots,s_{i+4}$ whose double ribs are bicolored. Assume, without loss of generality, that $i=1$. Further assume that the edges $xy$ and $xs_{3}$ are red, so the edge $s_{3}y$ is blue.
 
 \begin{description}
  \item[Case 1: $xs_{2}$ is red.]
   Then $s_{2}y$ is blue.
   If the spine edge $s_{2}s_{3}$ is blue, then $s_{2},s_{3},y$ bound an inner face $f$ with the desired properties ensured by the vertex $x$ that sends red edges to $f$.
   So we may assume that $s_{2}s_{3}$ is red.
   For the same reason, if $xs_1$ is also red, then also $s_1s_{2}$ is red, giving a red $C_4$ with vertices $x,s_1,s_{2},s_{3}$.
   So we may assume that $xs_1$ is blue and hence $s_1y$ is red.
   Now if $s_1s_{2}$ is red, there is a red $C_4$ with vertices $x,y,s_1,s_{2}$.
   So we may assume that $s_1s_{2}$ is blue.
 
   Symmetrically, we may assume that $xs_{4}$ is blue, hence $s_{4}y$ is red, and $s_{3}s_{4}$ is blue.
   But now $s_{2},s_{3},x$ bound an inner face with the desired properties; see the right part of Figure~\ref{fig:C4-full-graph}.
 
  \item[Case 2: $xs_{2}$ is blue.]
   Then $s_{2}y$ is red.
   Now if $s_{2}s_{3}$ is red, we have a red $C_4$ with vertices $x,y,s_{2},s_{3}$.
   So we may assume that $s_{2}s_{3}$ is blue.
   By symmetry we may also assume that $xs_{4}$ is blue, hence $s_{4}y$ is red, and $s_{3}s_{4}$ is blue.
   But then we have a blue $C_4$ with vertices $x,s_{2},s_{3},s_{4}$; see the right part of Figure~\ref{fig:C4-full-graph}.
 \end{description}

 This proves the claim that $F_{x,y}$ contains a monochromatic $C_4$ or a monochromatic inner face $f$, say in red, such that any two vertices of $f$ are joined by a blue $P_3$ in $F_{x,y}$.
 In the former case we are done.
 In the latter case note that as $f$ is all red, any two vertices of $f$ are also joined by a red $P_3$ in $F_{x,y}$.
 Now consider the vertex $z$ in $G - F_{x,y}$ whose three neighbors are the vertices of $f$.
 As two of the three edges incident to $z$ have the same color, there are two vertices in $f$ that are joined by two distinct but identically colored $P_3$'s in $G$.
 That is, there is a monochromatic copy of $C_4$ in $G$.
\end{proof}


\section{Proof of Theorem~\ref{not-good}}\label{sec:not-good-1}

Let $T_1$ be a tree of radius~$3$ with root $r$ and all vertices of distance $0,1,2$ to $r$ having degree~$5$. 
See Figure~\ref{fig:trees-and-colorings}.
Let $G$ be a planar graph.
Let $V_1, V_2, V_3$ be a partition of $V(G)$ such that each $V_i$ induces a linear forest in $G$, $i=1,2,3$, such a partition exists by a result of Poh~\cite{Poh}.
Further, consider an orientation of $G$ with out-degree at most~$3$ at each vertex, see~\cite{H}.
(This orientation result also follows from~\cite{NW}.)
For $i = 1,2,3$ color the edges in $G[V_i]$ alternately red and blue along the paths in $G[V_i]$.
For each remaining directed edge $uv$ of $G$ we have $u \in V_i$ and $v \in V_j$ for some $i \neq j$.
Color $uv$ red if $i < j$ and blue if $i > j$.
See Figure~\ref{fig:trees-and-colorings}.
Assume that there is a monochromatic copy of $T_1$, say red.
Since the out-degree of each vertex in $G$ is at most~$3$, we see that each non-leaf vertex of $T_1$ has at least two incoming edges.
Due to the color alternation in each $G[V_i]$, at least one of the two incoming edges has its two endvertices in distinct parts.
In particular, the root $r$ is in $V_2$ or in $V_3$.
Then at least one vertex at distance $1$ or $2$ from $r$ is in $V_1$.
However, the vertices of $V_1$ have in-degree at most~$1$ in the red graph, a contradiction.\\
 
Let $T_2$ be a tree of radius~$2$ with root $r$ and all vertices of distance $0,1$ to $r$ having degree~$4$.
See Figure~\ref{fig:trees-and-colorings}.
Similarly, let $G$ be an outerplanar graph.
Let $V_1, V_2$ be a partition of $V(G)$ such that each $V_i$ induces a linear forest in $G$, $i=1,2$, such a partition exists by a result of Cowen et al.~\cite{CCW}.
Further, consider an orientation of $G$ with out-degree at most~$2$ at each vertex, see~\cite{H,NW}.
For $i = 1,2$ color the edges in $G[V_i]$ alternately red and blue along the paths in $G[V_i]$.
For each remaining directed edge $uv$ of $G$ we have $u \in V_i$ and $v \in V_j$ for some $i \neq j$.
Color $uv$ red if $i < j$ and blue if $i > j$.
See Figure~\ref{fig:trees-and-colorings}.
Assume that there is a red copy of $T_2$.
Since the out-degree of each vertex in $G$ is at most~$2$, each non-leaf vertex of $T_2$ has two incoming edges.
Thus the root $r$ is in $V_2$ and at least one of its neighbors is in $V_1$, a contradiction.\\
 
The trees $T_1$ and $T_2$ have $106$ and $21$ vertices, respectively, and are illustrated in Figure~\ref{fig:trees-and-colorings}.
We know that every planar avoidable tree has at least $8$ vertices since it has radius at least three and it is not a generalized broom. 

\begin{figure}
 \centering
 \includegraphics{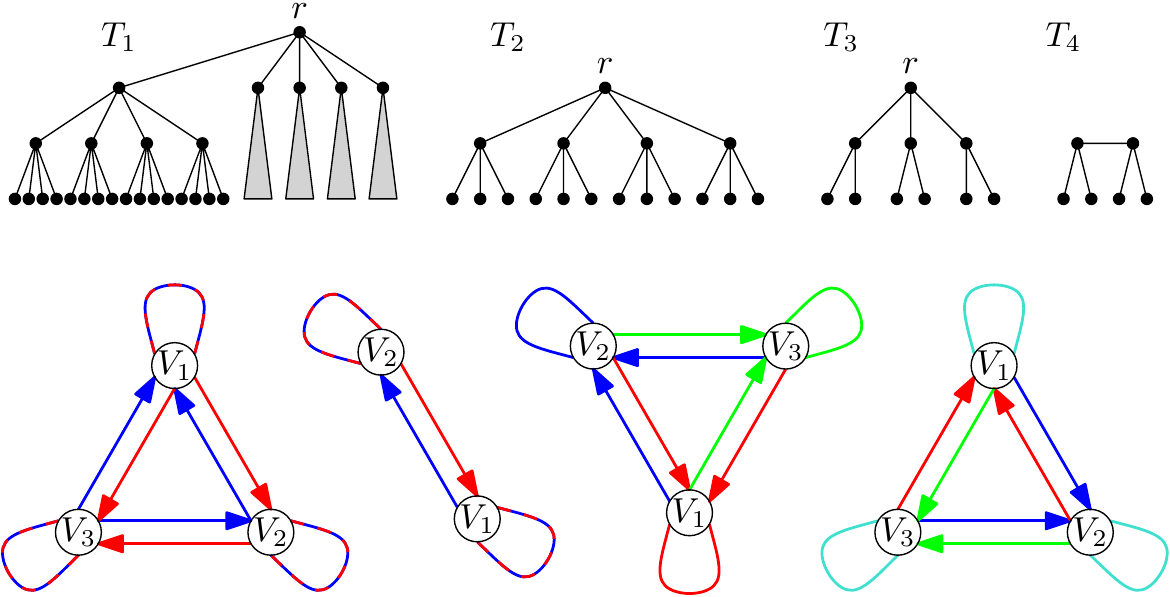}
 \caption{Illustrations of trees $T_1,\ldots,T_4$ defined in the proofs of Theorem~\ref{not-good} and~\ref{more-colors}:
 $T_1$ is planar avoidable with $2$ colors.
 $T_2$ is avoidable with $2$ colors in the class of outerplanar graphs.
 $T_3$ is planar avoidable with $3$ colors.
 $T_4$ is planar avoidable with $4$ colors.
 The colorings below illustrate patterns of how to color any planar (outerplanar) graph on basis of a partition $V_1,V_2,V_3$ ($V_1,V_2$) of the vertices, and an orientation of the edges between the parts.
 }
 \label{fig:trees-and-colorings}
\end{figure}

\section{Proof of Theorem~\ref{more-colors}}\label{sec:more-colors}

A result of Nash-Williams~\cite{NW} implies that any planar graph can be edge-decomposed into at most three forests.
Thus any graph $H$ that is not a forest is $3$-planar avoidable.
Another result of Gon\c{c}alves~\cite{Go1} states that any planar graph can be edge-colored in four colors so that each color class is a forest of caterpillars.
Thus any graph $H$ that is not a caterpillar forest is $4$-planar avoidable.\\

For the remainder of the proof  let $G$ be any planar graph.
Let $V_1,V_2,V_3$ be a partition of the vertex set $V(G)$ so that $G[V_i]$ is a linear forest~\cite{Poh}.
We shall define two colorings $c_3$ and $c_4$ of the edges of $G$ with three and four colors, respectively.
To this end, consider the bipartite subgraphs $B_1,B_2,B_3$ of $G$ with partitions $(V_2,V_3)$, $(V_1,V_3)$, $(V_1,V_2)$, and containing all edges of $G$ between respective parts.
For each $i=1,2,3$ orient the edges of $B_i$ so that the out-degree at each vertex is no more than~$2$.
(Such an orientation exists by~\cite{H,NW} as bipartite $n$-vertex planar graphs have no more than $2n-3$ edges, by Euler's formula.)

\begin{description}
 \item[Coloring $c_3$:]
  For $i=1,2,3$, color all edges in $G[V_i]$ and all edges of $G$ that are oriented incoming at a vertex of $V_i$ in color~$i$.
  
 \item[Coloring $c_4$:]
  For $i=1,2,3$, color all edges of $G$ that are oriented incoming at a vertex of $V_i$ in color~$i$.
  Further, color all edges in $G[V_1]$, $G[V_2]$, $G[V_3]$ in color~$4$.
\end{description}

\noindent
Next we show that a tree $T_3$ of radius~$2$ with root $r$ and all vertices of distance $0,1$ to $r$ of degree at least~$3$ (see Figure~\ref{fig:trees-and-colorings}) is $3$-planar avoidable.
We claim that $c_3$ does not contain a monochromatic copy of $T_3$.
In fact, if $v$ is any vertex with at least three incident edges of the same color $i$, then $v$ must be a vertex in $V_i$.
However, $G[V_i]$ has maximum degree at most~$2$, while the vertices of degree at least~$3$ in $T_3$ induce a subgraph of maximum degree at least~$3$.
Hence there is no monochromatic copy of $T_3$ in $G$ under coloring $c_3$.\\

\noindent
Finally, we show that a symmetric double star $T_4$ on $6$ vertices, i.e, a tree with two adjacent vertices of degree~$3$ and four leaves (see Figure~\ref{fig:trees-and-colorings}) is $4$-planar avoidable.
We claim that $c_4$ does not contain a monochromatic copy of $T_4$.
First, color~$4$ is a disjoint union of paths, and thus there is no copy of $T_4$ in color~$4$.
For color $i \in \{1,2,3\}$ we see that, as before, only vertices in $V_i$ may have three incident edges of color $i$.
However, as $V_i$ is an independent set in the subgraph of color $i$, there is no copy of $T_4$ in that subgraph.
Hence there is no monochromatic copy of $T_4$ in $G$ under coloring $c_4$, as desired. \qed \\

Let us remark that coloring $c_3$ shows that every graph $H$ in which the vertices of degree at least~$4$ induce a subgraph of maximum degree at least~$3$ is $3$-planar avoidable.
Similarly, coloring $c_4$ shows that every graph $H$ with an odd-length path whose two endvertices have degree at least three each, is $4$-planar avoidable.

\section{Conclusions}\label{conclusions}
In this paper we initiated the study of Ramsey properties of planar graphs.
When two colors are considered, only some outerplanar bipartite graphs are unavoidable and even some trees are avoidable.
We showed that $C_4$ is unavoidable. The following questions remain open:\\

{\bf 1.} Are other even cycles unavoidable? \\

{\bf 2.} What is the smallest number of vertices in an avoidable tree?\\

All of our positive results, showing that some graphs are unavoidable, use the fact that the iterated triangulation $\Tr(n)$ arrows these graphs.\\

{\bf 3.} Is is true that for each planar unavoidable graph $H$ there is $n=n(H)$ such that $\Tr(n) \rightarrow G$?

\end{document}